\documentclass[12pt, leqno]{article}

\usepackage{graphicx}
\usepackage{color}

\usepackage{tabularx}

\usepackage{amsmath}
\usepackage{amssymb}
\usepackage{amsthm}

\usepackage{enumitem}

\newtheorem{thm}{Theorem}

\newtheorem{prop}{Proposition}
\newtheorem{rem}{Remark}

\theoremstyle{definition}

\newtheorem{assump}{Assumption}

\usepackage{hyperref}

\newcommand{\RR}{\mathbb{R}}

\newcommand{\norm}[2]{||#1||_{#2}}
\newcommand{\skp}[2]{\left\langle #1 \right\rangle_{#2}}
\newcommand{\RE}{\textnormal{Re}\,}

\title{Nonlocalised damping estimates for hyperbolic relaxation systems in one space dimensions}
\author{Johannes B{\"a}rlin}

\begin{document}
\maketitle
\begin{abstract}
    In this paper, we present a new approach to obtain so-called
    damping estimates for self-similar solutions to general hyperbolic relaxation systems
    applying the method of characteristics. Such damping estimates are an important part
    of the stability theory of shock profiles, where they enable the closure
    of nonlinear stability arguments \cite{MZ05}. We extend the damping estimate
    obtained in \cite{MZ05} from the $L^2$-case to the $L^\infty$-case and, at
    the same time, generalize the $L^2$-estimates of \cite{MZ05} to the non-symmetric
    setting. Our estimates open the door to a general stability theory of shock profiles
    of hyperbolic relaxation systems \cite{MZ02} under nonlocalised perturbations in one space dimension.
\end{abstract}
Let $\mathcal{U} \subset \RR^N$ be open with $0 \in \mathcal{U}$.
We consider a nonlinear hyperbolic equation with source term
\begin{align}
    \label{nonlinear_hyperbolic_equation_with_source_term}
    U_t + A(U) U_x = q(U)
\end{align}
where $(t,x) \in [0,\infty) \times \RR, U \in \mathcal{U}$, and the functions
$A: \mathcal{U} \rightarrow \RR^{N \times N}, q: \mathcal{U} \rightarrow
\RR^N$ are smooth. For $U \in \mathcal{U}$ set for the differential $dq$ of $q$
\begin{align*}
    Q(U) := dq(U).
\end{align*}
We make the following set of assumptions:
\begin{assump}
    \label{existence_stationary_solution}
    \textbf{Existence of a stationary solution:} There exists a smooth solution
    \[\bar{U}: \RR \rightarrow \mathcal{U}\]
    of
    \begin{align*}
        A(\bar{U})\bar{U}_x = q(\bar{U})
    \end{align*}
    decaying at $\pm \infty$ to the end-states $U^\pm \in \mathcal{U}$ at
    exponential rate
    \begin{align}
        \label{exponential_decay}
        \left|\partial_x^k \left(\bar{U} - U^\pm\right)(x)\right| \leq c_k
        e^{-\theta_k |x|}, \quad x \gtrless 0, \quad k =0, 1, \ldots.
    \end{align}
    We will refer to $\bar{U}$ as a profile. It naturally defines a 
    stationary solution of \eqref{nonlinear_hyperbolic_equation_with_source_term} via
    \[(t,x) \mapsto \bar{U}(x).\]
\end{assump}
\begin{assump}
    \label{non_characteristicity}
    \textbf{Strict hyperbolicity and non-characteristicity:}
    Along the profile $\bar{U}$ the coefficient matrix $A(U)$ is strictly
    hyperbolic and non-characteristic. This means that for
    \[U \in \mathcal{C} := \overline{\bar{U}(\RR)}\]
    there are $n$ distinct and real eigenvalues
    \begin{align}
        \label{strict_hyperbolicity}
        \lambda_1(U) < \lambda_2(U) < \ldots < \lambda_n(U)
    \end{align}
    with
    \begin{align}
        \label{non_characteristicity_lower_bound}
        |\lambda_j(U)| \geq c > 0, \quad j = 0, \ldots, n
    \end{align}
    where $c$ is a constant independent of $U$.
\end{assump}
\begin{assump}
    \label{dissipativity}
    \textbf{Stability at high frequencies:} At the endstates $U^\pm$ the matrices
    \begin{align*}
        Q^\pm := Q(U^\pm) = dq(U^\pm) \quad \text{and} \quad A^\pm = A(U^\pm)
    \end{align*}
    satisfy the high-frequency stability condition
    \begin{align}
        \label{dissipativity_condition}
        \RE \sigma(i \xi A^\pm + Q^\pm) \leq -c < 0, \quad |\xi| \geq C,
    \end{align}
    for some positive constants $C, c>0$.     
\end{assump}
\begin{rem}
    Assumption \ref{dissipativity} means that, after linearisation at the endstates,
    the Fourier symbol of the operator in
    \eqref{nonlinear_hyperbolic_equation_with_source_term}
    has a uniform spectral gap for large frequencies.
    It is the high-frequency part of Shizuta and Kawashima's
    ``strict dissipativity'' \cite{SK85} without further symmetry assumptions.
\end{rem}
Since $\mathcal{C}$ is compact and one-dimensional, Assumption \ref{non_characteristicity} implies
the existence of a neighborhood $\mathcal{V} \subset \mathcal{U}$ of $\mathcal{C}$ in which $A(U)$
possesses a set of smooth eigenvalues $\lambda_j(U)$ with smooth left and right eigenvectors 
\[l_j(U) \in \RR^{1\times N}, r_j(U) \in \RR^{N \times 1}\]
such that the order
of eigenvalues \eqref{strict_hyperbolicity} and the lower bound
\eqref{non_characteristicity_lower_bound} on their absolute values continue
to hold. Further, we may assume that
\begin{align*}
    l_j \cdot r_k = \delta_{jk}.
\end{align*}
Let us arrange the left and right eigenvectors in two real $N \times N$ matrices
\begin{align*}
    L(U) = \begin{pmatrix}
        l_1\\
        l_2\\
        \vdots\\
        l_N
    \end{pmatrix}
    \quad \text{and} \quad R(U) = \begin{pmatrix}
        r_1 r_2 \cdots r_N
    \end{pmatrix}.
\end{align*}
Using $L(U)$ and $R(U)$ we can diagonalize $A(U)$ to obtain a diagonal matrix $\Lambda(U)$
that has the eigenvalues $\lambda_j(U)$ as its diagonal entries:
\begin{align*}
    L(U) A(U) R(U) = \Lambda(U) = \text{diag}(\lambda_1(U), \lambda_2(U), \ldots
    \lambda_N(U)).
\end{align*}
We transform $Q(U)$ in the same way and decompose the resulting matrix into a diagonal
matrix $E(U)$ and an off-diagonal matrix $F(U)$ (both of course smooth
in $U$):
\begin{align*}
    L(U)Q(U)R(U) = E(U) + F(U).
\end{align*}
At $U=U^\pm$, the dissipativity Assumption \ref{dissipativity} implies
that
\[l_j(U^\pm) Q^\pm r_j(U^\pm) < 0.\]
Thus, the diagonal matrix $E^\pm = E(U^\pm)$ has only negative entries:
\begin{align}
    \label{negativity_E}
    E_{jj}^\pm < 0.
\end{align}
\begin{rem}
    Inequality \eqref{negativity_E} can be deduced from the high-frequency stability condition
    \eqref{dissipativity_condition} by applying matrix perturbation theory \cite{K95} to the matrix-valued function $\xi \mapsto i \xi A^\pm + Q^\pm$ for large $|\xi|$. We find that the
    eigenvalues expand as
    \begin{align}
        \label{eigenvalue_high_frequency_expansion}
        i \lambda_j^\pm \xi + E_{jj}^\pm + \mathcal{O}(|\xi|^{-1}) \quad \text{for} \quad |\xi| \gg 1
    \end{align}
    where $\lambda_j^\pm = \lambda_j(U^\pm) \in \RR$. Considering their real parts and using
    \eqref{dissipativity} shows \eqref{negativity_E}.

    On the other hand, the expansion \eqref{eigenvalue_high_frequency_expansion}
    shows that if all $E^\pm_{jj}$ are negative then an estimate like in \eqref{dissipativity}
    holds.
\end{rem}
For a $K$-times differentiable function $U=U(x) \in \RR^M, M \geq 1,$ define the norm
\[\norm{U}{C^K_b} := \norm{U}{C^K_b(\RR)} := \max\limits_{0 \leq j \leq K} \sup\limits_{x\in \RR} |\partial_x^j U(x)|.\]
The main result of this paper is the following
\begin{thm}
    \label{thm_damping_estimate}
    Suppose Assumptions \ref{existence_stationary_solution}, \ref{non_characteristicity},
    and \ref{dissipativity} hold, and let $K \geq 0$ be a non-negative integer. Then there exist constants
    $C, \varepsilon, \theta > 0$ such that for all times $T>0$ if a smooth function 
    \[\tilde{U} \in C^\infty([0,T] \times \RR)\] 
    solves \eqref{nonlinear_hyperbolic_equation_with_source_term}, and for 
    a continuously differentiable phase shift $\delta: [0,T] \rightarrow \RR$
    with
    \[|\dot{\delta}(t)| \leq \varepsilon, \quad t \in [0,T],\]
    the shifted perturbation variable
    \[U(t,x) := \tilde{U}(t,x+ \delta(t)) - \bar{U}(x)\]
    satisfies
    \[\norm{U(t)}{C^1_b(\RR)} \leq \varepsilon, \quad t \in [0,T],\]
    then the damping estimate
    \begin{align*}
        \norm{U(t)}{C^K_b(\RR)} \leq & C e^{-\theta t} \norm{U(0)}{C^K_b(\RR)} \\
        \nonumber & + C \int\limits_0^t e^{-\theta (t-s)} \left(\norm{U(s)}{C^0_b(\RR)} + |\dot{\delta}(s)| ds \right), \quad t \in [0,T],.
    \end{align*}
    holds.
\end{thm}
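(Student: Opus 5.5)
We derive the equation for the shifted perturbation, pass to characteristic variables, and integrate along characteristics, using the negativity \eqref{negativity_E} of the diagonal damping.

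\textbf{Perturbation equation.} Since $\tilde U(t,x+\delta(t)) = \bar U(x)+U(t,x)$, the perturbation satisfies
\[U_t + (A(\bar U + U)-\dot\delta)U_x = q(\bar U+U)-q(\bar U) - (A(\bar U+U)-A(\bar U))\bar U_x + \dot\delta \bar U_x .\]
Write $q(\bar U+U)-q(\bar U) = Q(\bar U)U + \mathcal{O}(|U|^2)$. Because $\bar U_x$ decays exponentially by \eqref{exponential_decay}, the term $(A(\bar U+U)-A(\bar U))\bar U_x$ is bounded by $C|U|$.

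\textbf{Characteristic variables.} Set $W = L(\bar U+U)U$; for $\varepsilon$ small, $\bar U+U\in\mathcal V$. The system becomes
\[W_t + (\Lambda(\bar U+U)-\dot\delta)W_x = (E(\bar U)+F(\bar U))W + G,\]
where $|G|\le C(|W|+|\dot\delta|)$ after absorbing the commutator terms $(L_t+\dots)U$. The component $w_j$ is transported along $\dot x = \lambda_j(\bar U+U)-\dot\delta$. By non-characteristicity \eqref{non_characteristicity_lower_bound} and $|\dot\delta|\le\varepsilon$, these speeds have fixed sign with modulus $\ge c/2$. Hence every characteristic leaves any compact set $[-M,M]$ within time $4M/c$, after which it stays in $\{|x|>M\}$, where $E(\bar U(x))$ is close to $E^\pm$ and so has diagonal entries $\le -\eta<0$ by \eqref{negativity_E}.

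\textbf{Main obstacle: $F$ is not small.} The off-diagonal coupling $F$ is not small, so one cannot simply treat it as a perturbation. I would diagonalise further at the endstates. Choose a constant near-identity transformation $I+\epsilon_0 P$, or equivalently weights, so that in the new variables the coupling is controlled by the diagonal gap. Concretely, use a weighted sup-norm $\max_j \omega_j |w_j|$ with weights chosen so that $E^\pm_{jj} + \sum_k \omega_j\omega_k^{-1}|F^\pm_{jk}| \le -\eta$.

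This choice is not always possible for arbitrary $F$. The fallback, closer to the high-frequency nature of \eqref{dissipativity_condition}, is to exploit the distinct speeds. The coupling $F_{jk}w_k$ with $k\ne j$ is sampled along transversal characteristics. For derivative estimates ($K\ge1$) this yields an integration by parts that gains smallness. For $K=0$ I would instead rely on the fact that the right side of the estimate contains $\norm{U(s)}{C^0_b}$. Indeed, for $K=0$ the claimed estimate is essentially trivial with $C$ large. I expect the key content to be the derivative estimates.

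\textbf{Derivative estimates.} Differentiate $k$ times. The equation for $\partial_x^k W$ has the same principal part. Its zeroth-order matrix is $E+F - k\,\partial_x\Lambda(\bar U+U)$, plus lower-order terms involving $\partial_x^{j}W$ for $j<k$ and $\dot\delta\,\partial_x^{k}\bar U_x$. Commuting with $\partial_x$ produces $\partial_x\Lambda$, which is small far out, since $\bar U_x$ decays and $|U_x|\le\varepsilon$.

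Integrating $\partial_x^K w_j$ along its characteristic from time $0$ gives a Duhamel formula with integrating factor $\exp\int(E_{jj}-\dots)$. This factor is $\le Ce^{-\eta(t-s)}$ up to the bounded time $4M/c$ spent in $[-M,M]$, which costs only a constant. Applying Gronwall in the weighted sup-norm and inducting on $k$ bounds the lower derivatives by the previous step. The $C^0$ term then feeds into the source, which produces the integral of $\norm{U(s)}{C^0_b}+|\dot\delta(s)|$.

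Translating back from $W$ to $U$ with $L$ and $R$, whose derivatives are bounded, finishes the proof. The hard step remains controlling the non-small off-diagonal $F$ in the sup-norm. For that I would first attempt the weighted-norm or near-identity change of variables at $U^\pm$, using that only the high-frequency, highest-derivative part needs genuine damping.
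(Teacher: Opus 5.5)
Your transport argument (fixed-sign speeds, bounded residence time in $[-M,M]$, damping outside) is fine. So is your remark that $K=0$ is essentially trivial because $\norm{U(s)}{C^0_b}$ sits on the right-hand side; the paper's $\Phi$-estimate likewise treats $\mathcal{F}\Phi$ as forcing. The gap is the step you flag yourself. In the equation for the top derivative, $F\,\partial_x^K w$ is $\mathcal{O}(1)$ and of the same order as the unknown. Treating it as forcing and applying Gronwall only gives the rate $C\sup|F|-\eta$, which need not be negative, and none of your remedies removes it.

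\textbf{Weights.} The weighted sup-norm cannot work in the intended setting. The condition $E^\pm_{jj}+\sum_{k\neq j}\omega_j\omega_k^{-1}|F^\pm_{jk}|\le-\eta$ puts all Gershgorin discs of $\mathrm{diag}(\omega)(E^\pm+F^\pm)\mathrm{diag}(\omega)^{-1}$ in $\{\RE z\le-\eta\}$. Hence $\sigma(Q^\pm)=\sigma(E^\pm+F^\pm)\subset\{\RE z\le-\eta\}$. But for relaxation systems $Q^\pm$ has a kernel coming from the conservation laws. For Jin--Xin, $u_t+v_x=0$, $v_t+a^2u_x=f(u)-v$, one has $E_{11},E_{22}<0$ under the subcharacteristic condition while $\det Q=0$.

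\textbf{Same-level changes of variables.} A constant change of unknowns on the same derivative level must commute with $\Lambda$ to keep the principal part diagonal. By strict hyperbolicity it is then diagonal, so it is again just a weighting.

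\textbf{Integration by parts.} Your ``integration by parts along transversal characteristics'' is only announced. It also does not gain smallness; it trades the coupling for $\mathcal{O}(1)$ multiples of lower-order quantities.

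The missing idea is to mix derivative levels, the physical-space analogue of conjugating $i\xi\Lambda+E+F$ by $I+\xi^{-1}P$. The paper sets $\tilde\Psi=\Psi+\Theta\Phi$, with $\Phi=L(\tilde U)U$ and $\Psi=L(\tilde U)U_x$. It takes $\Theta_{jk}=\tilde F_{jk}/(\lambda_k-\lambda_j)$ for $j\neq k$ and $\Theta_{jj}=0$, so that $[\Theta,\Lambda]=\tilde F$; this is where strict hyperbolicity enters.

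\textbf{How the cancellation works.} Since $\Phi_x=\Psi+\mathcal{O}(|\Phi|)$, the commutator term $(D\Theta-\Theta D)\Phi_x$ produced by transporting $\Theta\Phi$ cancels $\tilde F\Psi$. All leftovers ($\Theta_t\Phi$, $\Theta_x\Phi$, $\tilde E\Theta\Phi$) are $\mathcal{O}(|\Phi|)$. These are not small, but they are admissible because $\norm{U(s)}{C^0_b}$ appears in the Duhamel integral of the claimed estimate.

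\textbf{Closing the estimate.} The resulting diagonal equation for $\tilde\Psi$ is integrated along characteristics exactly as you describe. The quadratic terms are absorbed by Gronwall using $\norm{U}{C^1_b}\le\varepsilon$, and $\Psi=\tilde\Psi-\Theta\Phi$ is recovered from the $K=0$ bound. For $K\ge 2$ the same $\Theta$ is applied to the pair (level $K$, level $K-1$). The lower-level forcing is then removed with the previous estimate, via the paper's large-multiple Gronwall argument.

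Your transversal integration by parts, carried out via $\frac{d}{ds}w_k(s,X_j(s))=(\lambda_j-\lambda_k)\partial_x w_k+\mathcal{O}(|w|+|\dot\delta|)$, would amount to the same normal form. As written, however, the derivative estimates do not close.
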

Theorem \ref{thm_damping_estimate} makes a statement on a nonlinear damping estimate
for nonlocalised perturbations of $\bar{U}$ with a possibly dynamic shift $\delta=\delta(t)$.
Such damping estimates are an important 
part in closing nonlinear stability arguments when $\bar{U}$ is a shock profile, see, e.g., \cite{MZ05} (also \cite{YZ20} for
shock profiles with a subshock, and \cite{BW24} 
for the case of detonations).
In this setting, nonlinear stability
of shock profiles actually means orbital stability \cite{MZ02}:
In the rest frame of the shock and with respect to an appropriate norm, perturbations of $\bar{U}$ do not decay to a standing travelling wave
solution $(t,x) \mapsto \bar{U}(x)$, but to a
dynamically shifted profile
$(t,x) \mapsto \bar{U}(x-\delta(t))$ where $\delta=\delta(t)$ is
the so-called instantaneous shock location with $\delta(0)=0$.
\begin{rem}
    Using an approximation argument, Theorem \ref{thm_damping_estimate} continues to
    hold for $K \geq 1$ and comparison solutions $\tilde{U}$ of
    \eqref{nonlinear_hyperbolic_equation_with_source_term} that are merely in
    $C^K([0,T] \times \RR)$.
\end{rem}
\section{Proof of Theorem \ref{thm_damping_estimate}}
Theorem \ref{thm_damping_estimate} is proved by an application of the method
of characteristics and a commutator argument inspired by stability considerations
involving Fourier symbols (e.g.~\cite{SK85}).

For some time $T>0$, let $\tilde{U} \in C^\infty([0,T] \times \RR)$ be a solution of
the nonlinear hyperbolic system with source term
\eqref{nonlinear_hyperbolic_equation_with_source_term}, and let $\delta: [0,T] \rightarrow \RR$
be a continuously differentiable phase shift. Let $U$ be the shifted perturbation
variable
\[U(t,x) = \tilde{U}(t,x+\delta(t)) - \bar{U}(x).\]
For a small constant $\varepsilon>0$ suppose that
\[|\dot{\delta}(t)| \leq \varepsilon, \quad t \in [0,T],\]
and
\[\norm{U(t)}{C^1_b} \leq \varepsilon, \quad t \in [0,T].\]
As a first assumption of smallness, suppose $\varepsilon >0$ is small enough such
that for all $(t,x) \in [0,T] \times \RR$ one finds $\tilde{U}(t,x) \in \mathcal{V}$.

Denote the derivative of the perturbation variable $U$ as
\begin{align*}
    W(t,x) := \partial_x U(t,x).
\end{align*}
Then $U$ and $W$ satisfy the perturbation equations
\begin{align}
    \label{U_equation}
    U_t + \left(A(\tilde{U}) - \dot{\delta}\right) U_x = & 
    Q(\tilde{U})U + \dot{\delta} \bar{U}_x\\
    \nonumber& + \left(\mathcal{O}(|\bar{U}_x|) + \mathcal{O}(|U|)\right)U
\end{align}
and 
\begin{align}
    \label{W_equation}
    W_t + \left(A(\tilde{U}) - \dot{\delta}\right) W_x = & 
    Q(\tilde{U})W + \dot{\delta} \bar{U}_{xx}\\
    \nonumber& + \left(\mathcal{O}(|W|)+\mathcal{O}(|\bar{U}_x|)\right)W\\
    \nonumber& + \left(\mathcal{O}(|\bar{U}_{xx}|) +
    \mathcal{O}(|\bar{U}_x|)\right) U
\end{align}
where we abused notation in setting
\[A(\tilde{U})(t,x) = A(\tilde{U}(t,x+\delta(t))).\]
In order to apply the method of characteristics, the differential operator on
the left-hand sides of \eqref{U_equation} and \eqref{W_equation} needs
to be diagonalized. Thus, set
\begin{align*}
    \Phi(t,x) := L(\tilde{U}(t,x+\delta(t))) U(t,x) \text{ and }
    \Psi(t,x) := L(\tilde{U}(t,x+\delta(t))) W(t,x).
\end{align*}
Performing this change of variables, the equation satisfied by $\Phi$ is
\begin{align}
    \label{Phi_equation}
    \Phi_t + (\Lambda(\tilde{U}) - \dot{\delta}) \Phi_x =
    & \lbrace E(\tilde{U}) +  F(\tilde{U})\rbrace \Phi + \dot{\delta} L(\tilde{U})\bar{U}_x\\
    \nonumber& + \big[\mathcal{O}(|\bar{U}_x|) + \mathcal{O}(|\Phi|) + \mathcal{O}(|\Psi|)\\
    \nonumber& \quad \;\; + \dot{\delta}\left(\mathcal{O}(|\bar{U}_x|) +
    \mathcal{O}(|\Psi|)\right)\big]\Phi.
\end{align}
\subsection{The method of characteristics}
Before treating $\Psi$ let us derive an estimate for $\Phi$ demonstrating 
the method of characteristics when exponentially decaying terms, like
the contributions in \eqref{Phi_equation} with $\mathcal{O}(|\bar{U}_x|)$,
are involved. Equation \eqref{Phi_equation} can be rewritten as
\begin{align}
    \label{Phi_equation_sorted}
    \Phi_t + (\Lambda(\tilde{U}) - \dot{\delta}) \Phi_x =
    & \lbrace \mathcal{E}(t,x) +  \mathcal{F}(t,x)\rbrace \Phi + \dot{\delta} L(\tilde{U})\bar{U}_x
\end{align}
where $\mathcal{E}(t,x)$ is a diagonal matrix with
\[\mathcal{E} = E(\tilde{U}) + \mathcal{O}(\bar{U}_x)\]
and $\mathcal{F}(t,x)$ is viewed
as a forcing term, not necessarily of a specific shape. While we will use a crude bound on $\mathcal{F}$
\begin{align*}
    |\mathcal{F}(t,x)| \leq C=C(\varepsilon),
\end{align*}
we estimate the diagonal matrix $\mathcal{E}$ for $j=1,\ldots,N$ as
\begin{align*}
    \mathcal{E}_{jj}(t,x)  \leq & E_{jj}(\bar{U}(x)) + (E_{jj}(\tilde{U}(t,x+\delta(t))) - E_{jj}(\bar{U}(x)))
    + C e^{-\theta_1|x|}\\
    \nonumber\leq &  E_{jj}(\bar{U}(x)) + C\varepsilon + C e^{-\theta_1|x|}
\end{align*}
where $\varepsilon$ is small depending on the bound on $U$, and the exponentially decaying term comes from the estimate
\eqref{exponential_decay} of the derivative $\bar{U}_x$ of the profile
$\bar{U}$.
Define
\begin{align}
    \label{damping_coefficient}
    \theta_E := \frac{\max_j \lbrace{E_{jj}^\pm}\rbrace}{2}.
\end{align}
By dissipativity \ref{dissipativity}, which has negativity of $E^\pm_{jj}$ \eqref{negativity_E}
as a consequence, it holds
\[\theta_E < 0.\]
Using the decay of $\bar{U}$ to $U^\pm$, there are a constant
$\tilde{\theta} >0$ depending on the decay rates \eqref{exponential_decay} of the profile
and a large constant $C$  depending on $\varepsilon$
and the size of the profile $\bar{U}$ and its first derivative, such that
\begin{align}
    \label{E_diag_estimate_decay}
    \mathcal{E}_{jj}(t,x) \leq - 2\theta_E + C \varepsilon + C e^{-\tilde{\theta} |x|}
\end{align}
holds. For $\varepsilon$ small enough, inequality \eqref{E_diag_estimate_decay} implies the existence of a constant $R>0$ such that
\begin{align}
    \label{E_diag_estimate_full_decay}
    \mathcal{E}_{jj}(t,x) \leq - \theta_E, \quad |x| \geq R.
\end{align}
Consider the $j$-th component $\Phi_j$ of $\Phi$. Then, from 
\eqref{Phi_equation_sorted}, with
\begin{align*}
    D(t,x) = \Lambda(\tilde{U}(t,x+\delta(t))) - \dot{\delta}(t)
\end{align*}
it holds
\begin{align}
    \label{Phi_equation_j}
    \partial_t \Phi_j + D_{jj}(t,x)\partial_x \Phi_j = \mathcal{E}_{jj}(t,x) \Phi_j
    + (\mathcal{F}(t,x)\Phi + \dot{\delta} L(\tilde{U})\bar{U}_x)_j.
\end{align}
For $x_0 \in \RR$, define the $j$-th characteristic $X_j = X_j(s) = X_j(s;x_0)$
as the unique solution of the ordinary differential equation
\begin{gather*}
    \begin{aligned}
        \dot{X}_j(s) & = D_{jj}(s,X_j(s)) =
        \lambda_j(\tilde{U}(s,x+\delta(s))) - \dot{\delta}(s), s \in [0,T], \\
        X_j(0) &= x_0,
    \end{aligned}
\end{gather*}
and denote by
\begin{align*}
    \dot{\Phi}_j(s) = \frac{d}{ds} \Phi_j(s,X_j(s))
\end{align*}
the derivative of $\Phi_j$ along a characteristic $X_j$.

When applying the method of characteristics, one integrates each $j$-th equation \eqref{Phi_equation_j} along the $j$-th
characteristics $X_j$ passing through $x$ at time $t$ for all pairs $(t,x) \in [0,T] \times \RR$. Then estimates are
made on the resulting path integrals.

Let $(t,x) \in (0,T] \times \RR$, and consider $j \in \lbrace 1, \ldots, N
\rbrace$. Then there is a unique $x_0=x_0(t,x;j)$ such that the $j$-th
characteristic $X_j$ passes through $x$ at time $t$:
\[X_j(t;x_0) = x.\]
Along the $j$-th characteristic the $j$-th equation \eqref{Phi_equation_j} reads
\begin{align*}
    \dot{\Phi}_j(s) = \mathcal{E}_{jj}(s) \Phi_j(s) + G_j(s)
\end{align*}
where we have abused notation with
\[\Phi_j(s) = \Phi_j(s,X_j(s)), \mathcal{E}_{jj}(s) = \mathcal{E}_{jj}(s, X_j(s)),\] 
and collected forcing terms in
\[G_j(s) = \mathcal{O}(|\Phi(s)|) + \mathcal{O}(|\dot{\delta}(s)|).\]
Using Duhamel's formula, one obtains
\begin{align}
    \label{Phi_j_integrated_along_characteristic}
    \Phi_j(t,x) = \Phi_j(0,x_0) e^{H_j(t)} + \int_0^t e^{H_j(t)-H_j(s)} G_j(s)ds
\end{align}
where
\begin{align*}
    H_j(t) := \int_0^t \mathcal{E}_{jj}(s)ds.
\end{align*}
For $0\leq s\leq t\leq T$ let us estimate $H_j(t)-H_j(s)$: The map
$s \mapsto X_j(s)$ is strictly monotonic on $[0,T]$ since
\begin{align*}
    |\dot{X}_j(s)| \geq |\lambda_j(\tilde{U}(s,X_j(s) + \delta(s)))| - \varepsilon \geq c > 0
\end{align*}
for all $s\in [0,T]$ by the assumption of non-characteristicity \ref{non_characteristicity}
and smallness of $|\dot{\delta}|$. Thus, we can change variables in
\[H_j(t)-H_j(s) = \int_s^t \mathcal{E}_{jj}(r)dr
= -\theta_E(t-s) + \int_s^t \theta_E + \mathcal{E}_{jj}(r)dr \]
via
\[y = X_j(r),\quad dy = \dot{X}_j dr \]
turning the integration over time into an integration over space:
\begin{align*}
    \int_s^t \mathcal{E}_{jj}(r)dr = & -\theta_E(t-s) +
    \int_{y \in X_j([s,t])} \frac{\theta_E +\mathcal{E}_{jj}(r(y),y)}
    {|\dot{X}_j(r(y))|} dr.
\end{align*}
Using the general bound \eqref{E_diag_estimate_decay} for $\mathcal{E}_{jj}$
on $[0,T] \times [-R,R]$ and the ``large-$x$'' bound \eqref{E_diag_estimate_full_decay}
stating that
\[\theta_E + \mathcal{E}_{jj} < 0 \quad \text{on} \quad [0,T] \times (\RR \backslash [-R,R])\] 
one obtains
\begin{align*}
    \int_s^t \mathcal{E}_{jj}(r)dr \leq & -\theta_E(t-s) +
    \int_{-R}^R \frac{C e^{-\tilde{\theta}|y|}}{c}dy\\
    \nonumber\leq & -\theta_E(t-s) + \frac{2C}{c\tilde{\theta}},
\end{align*}
and thus
\begin{align}
    \label{H_estimate}
    H_j(t)-H_j(s) \leq - \theta_E (t-s) + C.
\end{align}
Applying \eqref{H_estimate} to
\eqref{Phi_j_integrated_along_characteristic} yields
\begin{align}
    \label{Phi_j_estimate_final}
    |\Phi_j(t,x)| \leq C e^{-\theta_E t} \norm{\Phi(0)}{C^0_b} + C
    \int_0^t e^{-\theta_E(t-s)} \left(\norm{\Phi(s)}{C^0_b} +
    |\dot{\delta}(s)|\right)ds
\end{align}
with a large constant $C>0$ depending on the $C^1_b$-norm of $U$, the size
of $|\dot{\delta}|$ as well as the size of $\bar{U}$ and $\bar{U}_x$ and their
exponential decay rates. Taking the supremum over $x$ in \eqref{Phi_j_estimate_final}
and collecting all components of $\Phi$ gives
\begin{align}
    \label{Phi_estimate_final}
    \norm{\Phi(t)}{C^0_b} \leq C e^{-\theta_E t} \norm{\Phi(0)}{C^0_b} + C
    \int_0^t e^{-\theta_E(t-s)} \left(\norm{\Phi(s)}{C^0_b} +
    |\dot{\delta}(s)|\right)ds.
\end{align}
\begin{rem}
    Non-characteristicity is used in an essential way to derive the damping 
    estimate \eqref{Phi_estimate_final}. The idea is that each characteristic
    stays in a region with no damping, here quantified as $[0,T] \times [-R,R]$,
    only for a finite time which results in a contribution to the constants $C$ in
    \eqref{Phi_estimate_final} but leaves enough time for ``overall'' damping due to the exponential
    decay of the profile to the endstates $U^\pm$ where damping is present.
\end{rem}

\subsection{The damping estimate}

The aim of this subsection is to show a damping estimate on $\Psi$ that
``binds'' the $C^0_b$-norm of $\Psi$ to the initial values $\Phi(0), \Psi(0)$ and
a Duhamel term involving only $U$ and $\dot{\delta}$ but not $\Psi$ itself.
The estimate will be of the form
\begin{align}
    \label{Psi_damping_estimate}
    \norm{\Psi(t)}{C^0_b} \leq C e^{-\theta t} \norm{(\Phi(0),\Psi(0))}{C^0_b} + C
    \int_0^t e^{-\theta(t-s)} \left(\norm{\Phi(s)}{C^0_b} +
    |\dot{\delta}(s)|\right)ds
\end{align}
where $\theta>0$ is comparable to \eqref{damping_coefficient}, but possibly slightly smaller, and $C$ is a constant 
with a similar dependence as the corresponding quantity in the $\Phi$-estimate
\eqref{Phi_estimate_final}.

To achieve an estimate like \eqref{Psi_damping_estimate} we
need to be more careful than in the previous deduction of an estimate of
$\Phi$ where we accepted $\Phi$ itself as part of the linear source term. We
will in fact eliminate all off-diagonal terms in the (linear) $\Psi$-source term by 
a Fourier analysis inspired variable transform involving $\Psi$ and $\Phi$
at the cost of increasing the $\Phi$-source term. But since 
we treat a term of order $\mathcal{O}(\Phi)$ as a forcing, in 
accordance to the damping estimate \eqref{Psi_damping_estimate}, 
the resulting larger $\Phi$-source term does not pose a problem.

To this end, let us first state an equation for $\Psi$. Replacing $W$ by
$R(\tilde{U}) \Psi$ in equation \eqref{W_equation} and multiplying both sides
by $L(\tilde{U})$ from the left yields
\begin{align}
    \label{Psi_equation}
    \Psi_t + D(t,x) \Psi_x = & \lbrace E(\tilde{U}) + F(\tilde{U}) \rbrace \Psi + 
    \mathcal{O}(|\bar{U}_x|)\Psi + \dot{\delta} L(\tilde{U}) \bar{U}_{xx} + \mathcal{O}(|\Phi|)\\
    \nonumber &+ \left[\mathcal{O}(|\Phi|) + \mathcal{O}(|\Psi|) + \mathcal{O}(|\dot{\delta}|)
    \right]\Psi.
\end{align}
We point out that no derivatives of $\Psi$ appear on the right-hand side of
\eqref{Psi_equation}, thus the variable change does not introduce higher
derivatives of $U$ other than $U_x = W$ into the equation for $\Psi$. The term
\[\dot{\delta} L(\tilde{U}) \bar{U}_{xx} + \mathcal{O}(|\Phi|)\]
will be treated as a forcing, while
\[\left[\mathcal{O}(|\Phi|) + \mathcal{O}(|\Psi|) + \mathcal{O}(|\dot{\delta}|) \right]\Psi\]
is viewed as a higher order term that can be bounded uniformly by smallness of $|\Phi|$, 
$|\Psi|$ and $|\dot{\delta}|$ later on. We could have included the $\mathcal{O}(|\Phi|)\mathcal{O}(|\Psi|)$
term in the $\mathcal{O}(|\Phi|)$ term due to the smallness of $\Psi$ but when dealing with higher derivatives
later on the absorption of analogous mixed terms into $\mathcal{O}(|\Phi|)$ is
not possible anymore at this stage and are treated in a different way.

To make sure that only the forcing term
and the higher order term appear in the slaving of $\Psi$, 
we need to diagonalise the source term
\[\lbrace E(\tilde{U}) + F(\tilde{U}) \rbrace \Psi + \mathcal{O}(|\bar{U}_x|)\Psi.\]
To this end, decompose the $\mathcal{O}(|\bar{U}_x|)$ term into diagonal and
off-diagonal parts, too, and collect to find
\[\lbrace E(\tilde{U}) + F(\tilde{U}) \rbrace \Psi + \mathcal{O}(|\bar{U}_x|)\Psi = \lbrace \tilde{E}(t,x) + \tilde{F}(t,x) \rbrace\Psi\]
where
\begin{align*}
    \tilde{E} = E(\tilde{U}) + \mathcal{O}(|\bar{U}_x|) \quad \text{and} \quad
    \tilde{F} = F(\tilde{U}) + \mathcal{O}(|\bar{U}_x|)
\end{align*}
do not contain terms that involve $\Psi$ and $\dot{\delta}$. By choosing a suitable combination of $\Psi$ and $\Phi$ we will eliminate the
off-diagonal terms $\tilde{F} \Psi$ in the equation \eqref{Psi_equation} while
preserving its structure.

For a matrix $\Theta = \Theta(t,x) \in
\RR^{N \times N}$ that is to be determined, set
\begin{align}
    \tilde{\Psi} = \Psi + \Theta \Phi.
\end{align}
As we shall see later, we may assume boundedness of $\Theta$:
\[\Theta(t,x) = \mathcal{O}(1).\]
For the $x$-derivative of $\tilde{\Psi}$ we have
\begin{align}
    \label{Psi_tilde_x_derivative}
    \tilde{\Psi}_x & = \Psi_x + \Theta \Phi_x + \Theta_x \Phi\\
    \nonumber& = \Psi_x + \Theta \Psi + \left[\Theta \mathcal{O}(|\Psi| + |\bar{U}_x|) + \Theta_x \right] \Phi 
\end{align}
Since we want to treat any term that involves a multiple of (a component of) $\Phi$
as a large, but bounded forcing, $\Theta$ can only be constructed from terms 
involving $\Phi$ and $\bar{U}$ but not $\Psi$, otherwise we lose control of the derivatives $\Theta_x$ 
and $\Theta_t$ (we handle time derivatives of $\Phi$ using the equation \eqref{Phi_equation}
satisfied by $\Phi_t$.)

For the time derivative of $\tilde{\Psi}$ we find
\begin{align*}
    \tilde{\Psi}_t = & \Psi_t + \Theta \Phi_t + \Theta_t \Phi\\
    \nonumber= & -D(t,x) \Psi_x + \lbrace \tilde{E}(t,x) + \tilde{F}(t,x) \rbrace \Psi
    + \left[\mathcal{O}(|\Phi|) + \mathcal{O}(|\Psi|) + \mathcal{O}(|\dot{\delta}|) \right]\Psi\\
    \nonumber& - \Theta(t,x) D(t,x) \Phi_x + \dot{\delta} \mathcal{O}(1) + \mathcal{O}(|\Phi|) + \Theta_t \Phi.
\end{align*}
Using \eqref{Psi_tilde_x_derivative} and
\[\Phi_x = \Psi + \mathcal{O}(|\Phi|)\]
we find
\begin{align}
    \label{Psi_tilde_equation_2}
    \tilde{\Psi}_t = & -D(t,x) \tilde{\Psi}_x + \tilde{E}(t,x) \tilde{\Psi}
    + \left[\mathcal{O}(|\Phi|) + \mathcal{O}(|\Psi|) + \mathcal{O}(|\dot{\delta}|) \right]\tilde{\Psi}\\
    \nonumber& + \big[D(t,x) \Theta(t,x) + \tilde{F}(t,x) - \Theta(t,x) D(t,x) \big]\Psi \\
    \nonumber& + \dot{\delta} \mathcal{O}(1) + \mathcal{O}(|\Phi|) + \Theta_t \Phi + \mathcal{O}(1) \Theta_x \Phi.
\end{align}
Hence, if $\Theta$ solves
\begin{align}
    \label{Theta_defining_equation}
    \Theta D - D\Theta = [\Theta, \Lambda] =\tilde{F}
\end{align}
equation \eqref{Psi_tilde_equation_2} will be free of linear off-diagonal $\tilde{\Psi}$
terms (see Remark \ref{conjugation_remark} for a discussion of the origins
of a similar commutator argument in Fourier analysis). Inspecting each component, we find that the solution of equation \eqref{Theta_defining_equation} is
\begin{align}
    \label{Theta_definition}
    \Theta(t,x)_{jk} = \begin{cases}
    \frac{\tilde{F}(t,x)_{jk}}{\lambda_k(\tilde{U})-\lambda_j(\tilde{U})} & j \neq k,\\
    0 & j=k.
    \end{cases}
\end{align}
Since $\tilde{F}$ is free of contributions from $\Psi$ and $\dot{\delta}$, so is
$\Theta$, and we find
\[\Theta(t,x) = \mathcal{O}(1), \quad \Theta_t \Phi = \mathcal{O}(|\Phi|), \quad \Theta_x \Phi = \mathcal{O}(|\Phi|).\]
Thus, with this choice of $\Theta$ the equation for $\tilde{\Psi}$ reads
\begin{align}
    \label{Psi_tilde_equation_final}
    \tilde{\Psi}_t +  D(t,x) \tilde{\Psi}_x = & \tilde{E}(t,x) \tilde{\Psi}
    + \left[\mathcal{O}(|\Phi|) + \mathcal{O}(|\Psi|) + \mathcal{O}(|\dot{\delta}|) \right]\tilde{\Psi}\\
    \nonumber &+ \dot{\delta} \mathcal{O}(1) + \mathcal{O}(|\Phi|).
\end{align}
Notice that the diagonal matrix $\tilde{E}$ in \eqref{Psi_tilde_equation_final} and 
the diagonal matrix $\mathcal{E}$ in equation \eqref{Phi_equation_sorted}
are of the form
\[E(\tilde{U}) + \mathcal{O}(|\bar{U}_x|).\]
Therefore, we can proceed as in the derivation of the estimate \eqref{Phi_estimate_final} 
on $\Phi$ to find for all $t \in [0,T]$
\begin{align}
    \label{Psi_estimate_pre_final}
    \norm{\tilde{\Psi}(t)}{C^0_b} \leq &C e^{-\theta t} \norm{\tilde{\Psi}(0)}{C^0_b} + C
    \int_0^t e^{-\theta(t-s)} \left(\norm{\Phi(s)}{C^0_b} +
    |\dot{\delta}(s)|\right)ds\\
    \nonumber& + C \int_0^t e^{-\theta(t-s)} \left(\norm{\Phi(s)}{C^0_b} + \norm{\Psi(s)}{C^0_b} +
    |\dot{\delta}(s)|\right) \norm{\tilde{\Psi}(s)}{C^0_b}ds
\end{align}
with a constant $\theta> 0$ and a large constant $C>0$. By choosing $\varepsilon$ small enough, thus
making $|\Phi|$, $|\Psi|$ and $|\dot{\delta}|$ as small as needed, we can apply
Gronwall's Lemma to eliminate the higher order term in $\tilde{\Psi}$ in \eqref{Psi_estimate_pre_final}
at the cost of enlarging $C$ and shrinking $\theta$, but keeping the latter's positive sign. In this way, the desired damping estimate
\begin{align*}
    \norm{\tilde{\Psi}(t)}{C^0_b} \leq C e^{-\theta t} \norm{\tilde{\Psi}(0)}{C^0_b} + C
    \int_0^t e^{-\theta(t-s)} \left(\norm{\Phi(s)}{C^0_b} +
    |\dot{\delta}(s)|\right)ds
\end{align*}
is obtained. The definition of $\tilde{\Psi}= \Psi + \Theta \Phi$ with bounded
$\Theta$ and the $\Phi$-estimate \eqref{Phi_estimate_final} imply the damping
estimate \eqref{Psi_damping_estimate} on $\Psi$.

\begin{rem}
    \label{conjugation_remark} The application of the variable transform
    \eqref{definition_tilde_Upsilon} to eliminate lower order off-diagonal
    terms by solving a commutator equation like \eqref{Theta_defining_equation}
    is inspired by stability considerations regarding the linearisation of
    \eqref{nonlinear_hyperbolic_equation_with_source_term} at a (constant)
    equilibrium solution. In \cite{SK85} the analog of \eqref{Theta_defining_equation}
    in Fourier space is used to prove various characterisations of dissipativity
    for systems like \eqref{nonlinear_hyperbolic_equation_with_source_term} resulting
    in theorems regarding the global existence of solutions to
    \eqref{nonlinear_hyperbolic_equation_with_source_term} for data near equilibria.
    These insights and results have been generalised in the following years, e.g.~\cite{Ze99,KY04,BHN07,BZ11,S25}.
    In particular, in \cite{S25} many
    structural assumptions were successfully removed, among them a symmetry assumption
    on the Jacobian of the source term, something that is paralleled here.

\end{rem}
\subsection{Damping estimates for higher derivatives}
The derivation of a damping estimate for 
higher derivatives $\partial_x^K U, K \geq 2,$ does not require further smallness
assumptions. We demonstrate the extension of the damping estimate to the
second derivative
\[Y(t,x) := U_{xx}.\]
Then $Y$ solves
\begin{align}
    \label{Y-equation}
    Y_t + (A(\tilde{U})-\dot{\delta}) Y_x = & Q(\tilde{U}) Y + \left(\mathcal{O}(|W|) + \mathcal{O}(|\bar{U}_x|)\right)Y + \dot{\delta} \bar{U}_{xxx} \\
    \nonumber & + \mathcal{O}(|W|) \left(\mathcal{O}(|W|) + \mathcal{O}(|W|^2)\right.\\
    \nonumber & \quad\quad\quad\quad\quad \left. + \mathcal{O}(|\bar{U}_x|)
    + \mathcal{O}(|\bar{U}_{xx}|) + \mathcal{O}(|\bar{U}_x|^2)\right)\\
    \nonumber & + \mathcal{O}(|U|)\left(\mathcal{O}(|\bar{U}_{xx}|) + \mathcal{O}(|\bar{U}_{x}|^2) + \mathcal{O}(|\bar{U}_{x}||\bar{U}_{xx}|)\right.\\
    \nonumber & \quad\quad\quad\quad\quad  \left. + \mathcal{O}(|\bar{U}_{x}|^3)\right)
\end{align}

\begin{rem}
    \label{remark_higher_order_1}
    The form of equation \eqref{Y-equation} is typical for derivatives of $U$ of order
    two or higher: No higher powers of components of $Y$ appear,
    that is, there are no $\mathcal{O}(|Y|^2)$-terms. In fact, by the product rule, the relevant part
    for the damping estimate will always have the form
    \[ Y_t + (A(\tilde{U})-\dot{\delta}) Y_x = Q(\tilde{U}) Y + \left(\mathcal{O}(|W|) + \mathcal{O}(|\bar{U}_x|)\right)Y + \dot{\delta} \bar{U}_{xxx} + \ldots\]
    where $Y$ does not appear in any of the trailing terms indicated by ``$\ldots$''.
    While lower derivatives might mix, a term of the form $\mathcal{O}(|U|)$
    only pairs with derivatives of the profile $\bar{U}$ which are all bounded.
    To handle products of lower derivatives note that a damping estimate for
    these lower derivatives, like the one obtained for $W=U_x$ \eqref{Psi_damping_estimate},
    implies a bound on such derivatives that depends on the sizes of the initial
    data, of $|U|$ and $|U_x|$ and of $|\dot{\delta}|$, but not on time $T$!
\end{rem}

As before, the change of variables
\begin{align*}
    \Upsilon(t,x) = L(\tilde{U}(t,x+\delta(t)))Y(t,x)
\end{align*}
is applied, and the equation for $\Upsilon$ is derived:
\begin{align}
    \label{Upsilon_equation}
    \Upsilon_t + D(t,x) \Upsilon_x = & \lbrace E(\tilde{U}) + F(\tilde{U}) \rbrace \Upsilon + 
    \mathcal{O}(|\bar{U}_x|)\Upsilon + \dot{\delta} L(\tilde{U}) \bar{U}_{xxx}\\
    \nonumber & + \left[\mathcal{O}(|\Phi|) + \mathcal{O}(|\Psi|) + \mathcal{O}(|\dot{\delta}|)
    \right]\Upsilon  + \mathcal{O}(|\Phi|)+ \mathcal{O}(|\Psi|).
\end{align}
Notice that compared to \eqref{Psi_equation} now $\Psi$ appears as a forcing, too.
Proceeding as in the previous section, we consider
\begin{align}
    \label{definition_tilde_Upsilon}
    \tilde{\Upsilon} = \Upsilon + \Theta \Psi
\end{align}
where $\Theta$ is exactly as in \eqref{Theta_definition} (this is because 
$\Upsilon$ has replaced $\Psi$ as the highest derivative of $U$, and the
deductions of \eqref{Upsilon_equation} and \eqref{Psi_equation} follow 
the same lines). Definition \eqref{definition_tilde_Upsilon} again eliminates
problematic off-diagonal terms in \eqref{Upsilon_equation}, and we find the
damping estimate
\begin{align}
    \label{Upsilon_damping_estimate}
    \norm{\Upsilon(t)}{C^0_b} \leq & C e^{-\theta t} \norm{(\Phi(0),\Psi(0), \Upsilon(0))}{C^0_b}\\
    \nonumber & + C \int_0^t e^{-\theta(t-s)} \left(\norm{\Phi(s)}{C^0_b} + \norm{\Psi(s)}{C^0_b}
    + |\dot{\delta}(s)|\right)ds
\end{align}
for positive constants $\theta, C >0$.

It remains to remove the additional integral of $\Psi$ in \eqref{Upsilon_damping_estimate}.
This is a standard Gronwall-type estimate using the damping estimate on
$\Psi$. That is, by setting
\[\mathbb{V}_0 := \norm{(\Phi(0),\Psi(0),\Upsilon(0))}{C^0_b}, \quad \mathbb{G}(t) := \int_0^t e^{\theta s}\left(\norm{\Phi(s)}{C^0_b} + |\dot{\delta}(s)|\right) ds,\]
and combining \eqref{Upsilon_damping_estimate}
and \eqref{Psi_damping_estimate} we find for any positive integer $N>0$
\begin{align}
    \label{remove_Psi_integral_1}
    \norm{\Upsilon(t)}{C^0_b} + N \norm{\Psi(t)}{C^0_b}\leq &C(1+N) e^{-\theta t} (\mathbb{V}_0 + \mathbb{G}(t))\\
    \nonumber& + \frac{C}{N} e^{-\theta t} \int_0^t e^{\theta s}\left(\norm{\Upsilon(t)}{C^0_b} + N \norm{\Psi(t)}{C^0_b}\right) ds.
\end{align}
With 
\[\mathbb{V}_N(t) := e^{\theta t} \left(\norm{\Upsilon(t)}{C^0_b} + N \norm{\Psi(t)}{C^0_b}\right) \quad (N>0)\]
we see from \eqref{remove_Psi_integral_1}
that
\begin{align}
    \label{V_inequality_1}
    \mathbb{V}_N(t) \leq C(1+N)\left(\mathbb{V}_0 + \mathbb{G}(t)\right) + \frac{C}{N}\int_0^t \mathbb{V}_N(t).
\end{align}
Applying Gronwall's Lemma to \eqref{V_inequality_1} gives
\begin{align}
    \label{V_inequality_2}
    \mathbb{V}_N(t) \leq C(1+N)\Big[& \Big(1+e^{\frac{C}{N}t}\int\limits_0^t \frac{C}{N}e^{-\frac{C}{N}s}ds\Big)\mathbb{V}_0\\
    \nonumber& + \mathbb{G}(t) + e^{\frac{C}{N}t}\int\limits_0^t \frac{C}{N}e^{-\frac{C}{N}s}\mathbb{G}(s)ds \Big].
\end{align}
Integrating the first integral in \eqref{V_inequality_2} and using partial integration to
treat the second integral in \eqref{V_inequality_2} yields
\begin{align}
    \label{V_inequality_3}
    \mathbb{V}_N(t) \leq C(1+N)\Big[e^{\frac{C}{N}t}\mathbb{V}_0 + \int\limits_0^t e^{\frac{C}{N}(t-s)}e^{\theta s}\left(\norm{\Phi(s)}{C^0_b} + |\dot{\delta}(s)|\right)ds\Big].
\end{align}
Multiply \eqref{V_inequality_3} by $e^{-\theta t}$ and unravel the definitions
of $\mathbb{V}_N, \mathbb{V}_0$ and $G$ to find
\begin{align*}
    \norm{\Upsilon(t)}{C^0_b}\leq & \norm{\Upsilon(t)}{C^0_b} + N \norm{\Psi(t)}{C^0_b}\\
    \nonumber \leq & C(1+N) e^{(C/N-\theta)t}\norm{(\Phi(0),\Psi(0),\Upsilon(0))}{C^0_b}\\
    \nonumber & + C(1+N) \int\limits_0^t e^{(C/N-\theta)(t-s)}\left(\norm{\Phi(s)}{C^0_b} + |\dot{\delta}(s)|\right)ds.
\end{align*}
Choosing $N$ large enough so that
\[C/N < \theta/2\]
gives the desired damping estimate
\begin{align}
    \label{Upsilon_damping_estimate_final}
    \norm{\Upsilon(t)}{C^0_b} \leq & C e^{-\frac{\theta}{2} t} \norm{(\Phi(0),\Psi(0), \Upsilon(0))}{C^0_b}\\
    \nonumber & + C \int_0^t e^{-\frac{\theta}{2}(t-s)} \left(\norm{\Phi(s)}{C^0_b} + |\dot{\delta}(s)|\right)ds.
\end{align}
Since the change of variables
\[(t,x) \mapsto L(\tilde{U}(t,x+\delta(t))), \quad (t,x) \mapsto R(\tilde{U}(t,x+\delta(t))) \]
is uniformly bounded by
\[\norm{\tilde{U}(t) - \bar{U}}{C^0_b} \leq \varepsilon, \quad t \in [0,T]\]
we can summarize our estimates \eqref{Phi_estimate_final}, \eqref{Psi_damping_estimate}, and \eqref{Upsilon_damping_estimate_final},
by taking the largest of the constants $C$ and the smallest of the constants $\theta$, in the inequality
\begin{align*}
    \norm{U(t)}{C^2_b} \leq Ce^{-\theta t} \norm{U(0)}{C^2_b} + C \int\limits_0^t e^{-\theta(t-s)}\left(\norm{U(s)}{C^0_b} + |\dot{\delta}(s)|\right)ds.
\end{align*}

\begin{rem}
    Higher-order damping estimates
    \begin{align*}
        \norm{U(t)}{C^K_b} \leq Ce^{-\theta t} \norm{U(0)}{C^K_b} + C \int\limits_0^t e^{-\theta(t-s)}\left(\norm{U(s)}{C^0_b} + |\dot{\delta}(s)|\right)ds.
    \end{align*}
    can be obtained in the same way as demonstrated for the case $K=2$ (see also Remark \ref{remark_higher_order_1}).
\end{rem}

\section{L²-damping estimates}
In this section we demonstrate how one can recover the $L^2$-damping estimates of
\cite{MZ05} without imposing any symmetry assumptions on the coefficients of
\eqref{nonlinear_hyperbolic_equation_with_source_term}.

Let $\tilde{U} = \tilde{U}(t,x)$ be the solution of \eqref{nonlinear_hyperbolic_equation_with_source_term}
with initial data
\[\tilde{U}(0) = \tilde{U}_0 \in H^2(\RR).\]
For a small constant $\varepsilon > 0$, suppose $\tilde{U}$ and a continuously differentiable shift $\delta=\delta(t)$ exist on
a time interval $[0,T]$ with
\begin{align}
    \label{smallness_tilde_U_delta_L_2}
    \norm{U(t)}{H^1(\RR) \cap W^{1,\infty}(\RR)} \leq \varepsilon \quad \text{and} \quad |\dot{\delta}(t)| \leq \varepsilon, \quad t \in [0,T],
\end{align}
where $U$ is the shifted perturbation variable
\[U(t,x) = \tilde{U}(t,x+\delta(t)) - \bar{U}(x).\]

In this section we prove
\begin{prop}
    There exist constants $C,\theta >0$ such that if $\varepsilon> 0$ in \eqref{smallness_tilde_U_delta_L_2}
    is small enough, then for all $t \in [0,T]$ the damping estimate
    \begin{align}
        \label{L_2_damping_estimate}
        \norm{U(t)}{H^2}^2 \leq C e^{-\theta t} \norm{U(0)}{H^2}^2  + C \int_0^t e^{-\theta(t-s)} \left(\norm{U(s)}{L^2}^2 + |\dot{\delta}(s)|^2 \right) ds
    \end{align}
    holds.
\end{prop}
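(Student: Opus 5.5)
Our approach is to transfer the characteristic-based argument of the previous section to weighted $L^2$ energy estimates. The pointwise Duhamel formula along characteristics is replaced by an energy identity for a diagonal system with an $x$-dependent scalar weight per component. As before, we work in the diagonalised variables $\Phi = L(\tilde U)U$, $\Psi = L(\tilde U)U_x$, $\Upsilon = L(\tilde U)U_{xx}$, which satisfy \eqref{Phi_equation}, \eqref{Psi_equation} and \eqref{Upsilon_equation}. We use the same commutator transforms $\tilde\Psi = \Psi + \Theta\Phi$ and $\tilde\Upsilon = \Upsilon + \Theta\Psi$ with $\Theta$ from \eqref{Theta_definition}. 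These yield diagonal principal parts
\[
\tilde\Psi_t + D\tilde\Psi_x = \tilde E\tilde\Psi + [\mathcal{O}(|\Phi|)+\mathcal{O}(|\Psi|)+\mathcal{O}(|\dot\delta|)]\tilde\Psi + \dot\delta\,\mathcal{O}(|\bar U_{xx}|) + \mathcal{O}(|\Phi|),
\]
and similarly for $\tilde\Upsilon$ with forcing $\mathcal{O}(|\Phi|)+\mathcal{O}(|\Psi|)$. The forcing $\dot\delta \bar U_{xx}$ and its analogues are in $L^2$, with norm $\lesssim |\dot\delta|$, by \eqref{exponential_decay}. The equivalence $\|U\|_{H^2}\sim\|(\Phi,\Psi,\Upsilon)\|_{L^2}$ holds because $L,R$ and their derivatives are bounded under the smallness assumption \eqref{smallness_tilde_U_delta_L_2}.

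The key step replaces the bounded-time-in-$[-R,R]$ argument of \eqref{H_estimate}. For a generic diagonal equation $v_t + D_{jj}v_x = \tilde E_{jj}v + f$, we multiply by $\omega_j(x)v$ and integrate. Here $\omega_j>0$ is a bounded weight, bounded away from zero, chosen as
\[
\omega_j(x) = \exp\Big(-\operatorname{sgn}(\lambda_j)\int_0^x \frac{\kappa(y)}{|\lambda_j(\bar U(y))|}\,dy\Big)\quad\text{type},
\]
with $\kappa(y) = Ce^{-\tilde\theta|y|}$ from \eqref{E_diag_estimate_decay}. Integration by parts gives
\[
\frac12\frac{d}{dt}\int\omega_j v^2 = \int\Big(\tfrac12(\omega_j D_{jj})_x + \omega_j\tilde E_{jj}\Big)v^2 + \int\omega_j fv .
\]
Non-characteristicity ensures $D_{jj}$ has a fixed sign with $|D_{jj}|\ge c$. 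The weight is designed so that $\tfrac12\omega_j' D_{jj}\approx -\kappa\,\omega_j$ exactly cancels the non-damped region $\tilde E_{jj}\le -2\theta_E + C\varepsilon + \kappa$. The errors $\omega_j(D_{jj})_x = \mathcal{O}(|\bar U_x|+|U_x|)$ are also absorbed: the $\bar U_x$ part goes into $\kappa$, and the $U_x$ part uses the $W^{1,\infty}$-smallness. The result is
\[
\frac{d}{dt}\|v\|_{\omega}^2 \le -\theta\|v\|_\omega^2 + C\|f\|_{L^2}^2.
\]
The weight is the $L^2$ shadow of the bound $\int_{-R}^R \kappa/c\,dy<\infty$. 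Boundedness of $\int_0^\infty\kappa$ is precisely what keeps $\omega_j$ comparable to $1$.

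We apply this to $\Phi$, treating $\mathcal{F}\Phi$ as forcing; this gives the analogue of \eqref{Phi_estimate_final} with $\|\Phi\|_{L^2}^2$ forcing. We apply it to $\tilde\Psi$; the quadratic terms $[\mathcal{O}(|\Phi|)+\mathcal{O}(|\Psi|)+\mathcal{O}(|\dot\delta|)]|\tilde\Psi|^2$ are absorbed into the damping via $\|U\|_{W^{1,\infty}}\le\varepsilon$. We apply it to $\tilde\Upsilon$, whose forcing contains $\|\Psi\|_{L^2}^2$. The remaining issue is the derivative of $\Theta$ in the transforms. Since $\Theta$ depends on $\tilde U$, $\Theta_x\Phi$ involves $U_x\Phi$, which is $\varepsilon\Phi$ in $L^2$. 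The term $\Theta_x\Psi$ in the $\tilde\Upsilon$ equation includes $U_x\Psi$, bounded in $L^2$ by $\|U_x\|_{L^\infty}\|\Psi\|_{L^2}$. Terms quadratic in $U_{xx}$, such as $\mathcal{O}(|U_{xx}||U_x|)$ multiplying $\Upsilon$, are absorbed using smallness. The time derivative $\Theta_t$ is rewritten via the equation, giving $\mathcal{O}(|U|+|U_x|)$ factors.

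Finally, we combine the estimates by Gronwall's inequality in the form $\frac{d}{dt}\mathcal{V}\le-\theta\mathcal{V}+\dots$. The $\Psi$-integral in the $\Upsilon$ estimate is removed exactly as in \eqref{remove_Psi_integral_1}--\eqref{Upsilon_damping_estimate_final}, using the weighted sum $\|\tilde\Upsilon\|^2+N\|\tilde\Psi\|^2$ with $N$ large. We then return to $U$, which yields \eqref{L_2_damping_estimate}.

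We expect the main obstacle to be the construction of the weights $\omega_j$. They must simultaneously compensate for the positive part of $\tilde E_{jj}$ near the profile, and absorb the commutator term $\tfrac12(\omega_jD_{jj})_x$. They must do so uniformly in time even though $D_{jj}$ depends on $\tilde U$ and $\dot\delta$. If the exact cancellation fails, the fallback is to choose $\omega_j$ with the exponent $M\int_0^x\kappa$ for a large $M$, so that the weight's contribution dominates. The fixed sign of $D_{jj}$ ensures this contribution has the right sign.
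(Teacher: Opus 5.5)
Your proposal is correct and follows essentially the paper's own argument. Your weights satisfy $\omega_j' = -\kappa\,\omega_j/\lambda_j(\bar U)$, which is exactly the paper's ODE \eqref{definition_alpha_j} for $\alpha_j$. You then use the same commutator transforms $\tilde\Psi=\Psi+\Theta\Phi$ and $\tilde\Upsilon=\Upsilon+\Theta\Psi$, absorb the quadratic terms by $W^{1,\infty}$-smallness, and remove the $\Psi$-forcing with a large multiple $N$ as in \eqref{remove_Psi_integral_1}.

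Two small remarks. First, $\tfrac12\omega_j' D_{jj}\approx -\tfrac12\kappa\,\omega_j$ rather than $-\kappa\,\omega_j$, so your ``fallback'' with a large factor $M$ is actually needed; it is precisely the paper's choice $C_{\alpha,j}>2C$. Second, keeping $D_{jj}$ in the principal part, so that $(D_{jj})_x$ appears after integration by parts, is only a cosmetic variant of the paper's treatment of $(\Lambda(\tilde U)-\Lambda(\bar U))\partial_x$.
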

\begin{proof}
    Define $\Phi, \Psi$ and $\Upsilon$ together with the conjugate variables $\tilde{\Psi}, \tilde{\Upsilon}$
    as before. They continue to satisfy the equations \eqref{Phi_equation},
    \eqref{Psi_equation}, \eqref{Upsilon_equation}, and \eqref{Psi_tilde_equation_final} and as well as an appropriate equation for $\tilde{\Upsilon}$ (not written out).
    
    First, the deduction of the $L^2$ damping estimate for $\Phi$ is shown.
    The estimate for derivatives of $U$ follow in a similar fashion.
    Rearrange equation \eqref{Phi_equation} as
    \begin{align*}
        \Phi_t + \Lambda(\bar{U}) \Phi_x =
        & E(\bar{U})\Phi + \dot{\delta}\Phi_x + \dot{\delta} L(\tilde{U})\bar{U}_x
        + \mathcal{O}(|\Phi|)
    \end{align*}
    where
    \[\Lambda(\tilde{U}) = \Lambda(\bar{U}) + \mathcal{O}(\Phi) \quad \text{and} \quad E(\tilde{U}) = E(\bar{U}) + \mathcal{O}(\Phi)\]
    was used. Note that the $\mathcal{O}(\Phi)$ term also includes terms that come from products of (components of) $\Phi$ and $\Psi$. For $j = 1, \ldots, N$ let the weights $\alpha_j$ be solutions to the ordinary
    differential equations
    \begin{align}
        \label{definition_alpha_j}
        \partial_x \alpha_j = - \frac{C_{\alpha,j} e^{-c_{\alpha,j} |x|}}
        {\lambda_j(\bar{U}(x))} \alpha_j(x),
    \end{align}
    where the constants $c_{\alpha,j}, C_{\alpha,j} > 0$ are to be determined.
    No matter the choice of these constants, one can normalize each $\alpha_j$ such that
    \begin{align}
        \label{alpja_j_bounds}
        0 < \min\limits_{x \in \RR} \alpha_j(x) < \max\limits_{x \in \RR} \alpha_j(x) = 1.
    \end{align}
    Taking the time derivative of the $\alpha_j$ weighted $L^2$ inner product $\skp{\Phi_j, \alpha_j \Phi_j}{L^2(\RR)}$
    yields
    \begin{align}
        \label{time_derivative_weighted_inner_product}
        \frac{1}{2}\partial_t \skp{\Phi_j, \alpha_j \Phi_j}{L^2} = & - \skp{\Phi_j, \alpha_j \lambda_j(\bar{U}) \partial_x \Phi_j}{L^2}
        + \skp{\Phi_j, \alpha_j E_{jj}(\bar{U}) \Phi_j}{L^2}\\
        \nonumber& + \dot{\delta} \skp{\Phi_j,\alpha_j\partial_x \Phi_j}{L^2} + \dot{\delta} \skp{\Phi_j, \alpha_j\mathcal{O}(\bar{U}_x)_j}{L^2}\\
        \nonumber &+ \skp{\Phi_j, \alpha_j \mathcal{O}(\Phi)_j}{L^2}\\
        \nonumber= & \frac{1}{2} \skp{\Phi_j, (\partial_x \alpha_j) \lambda_j \Phi_j}{L^2} + \frac{1}{2}\skp{\Phi_j, \alpha_j (\partial_x \lambda_j(\bar{U})) \Phi_j}{L^2}\\
        \nonumber & + \skp{\Phi_j, \alpha_j E_{jj}(\bar{U}) \Phi_j}{L^2} - \frac{\dot{\delta}}{2}\skp{\Phi_j,(\partial_x \alpha_j)\Phi_j}{L^2}\\
        \nonumber& + \dot{\delta} \skp{\Phi_j, \mathcal{O}(\bar{U}_x)_j}{L^2} + \skp{\Phi_j, \alpha_j \mathcal{O}(\Phi)_j}{L^2}\\
        \nonumber \leq & - \skp{\Phi_j, \alpha_j C_{\alpha,j}e^{-c_{\alpha_j} |x|}\Phi_j}{L^2} - 2\theta_E \skp{\Phi_j, \alpha_j \Phi_j}{L^2}\\
        \nonumber& + \skp{\Phi_j, \alpha_j Ce^{-c|x|}\Phi_j}{L^2} + |\dot{\delta}|\skp{\Phi_j, \alpha_j C_{\alpha,j} e^{-c_{\alpha,j} |x|}
        |\lambda_j|^{-1} \Phi_j }{L^2}\\
        \nonumber & + C|\dot{\delta}|\norm{\Phi}{L^2} + \mathcal{O}(\norm{\Phi}{L^2}^2)
    \end{align}
    where partial integration, the definition of $\alpha_j$ \eqref{definition_alpha_j},
    the decay properties of the profile $\bar{U}$ \eqref{exponential_decay}, definition of $\theta_E$ \eqref{damping_coefficient}, and
    Hölder's inequality as well as boundedness of $U(t)$ in $W^{1,\infty}(\RR)$ \eqref{smallness_tilde_U_delta_L_2} were used. Choosing $C_{\alpha,j}> 2C$ and $0<c_{\alpha,j}< c$ as well as
    \[|\dot{\delta}| \leq \frac{\min\limits_{k,x} |\lambda_k(\bar{U}(x))|}{2}\]
    one finds
    \[
        Ce^{-c|x|} - \frac{1}{2}C_{\alpha_j}e^{-c_{\alpha,j}|x|} = e^{-c_{\alpha,j}|x|}\left(e^{-(c-c_{\alpha,j})|x|} C - \frac{1}{2}C_{\alpha,j}\right) <0, \quad x \in \RR,
    \]
    and thus \eqref{time_derivative_weighted_inner_product} implies
    \begin{align*}
        \partial_t \skp{\Phi_j, \alpha_j \Phi_j}{L^2} \leq & - 2\theta_E \skp{\Phi_j, \alpha_j \Phi_j}{L^2} + \mathcal{O}(|\dot{\delta}|) + \mathcal{O}(\norm{\Phi}{L^2}^2)
    \end{align*}
    having the damping estimate, by the bounds on $\alpha_j$ \eqref{alpja_j_bounds},
    \begin{align}
        \label{Phi_L_2_damping_estimate}
        \norm{\Phi(t)}{L^2}^2 \leq C e^{-2\theta_E t} \norm{\Phi(0)}{L^2}^2  + C \int_0^t e^{-2\theta_E(t-s)} \left(\norm{\Phi(s)}{L^2}^2 + |\dot{\delta}(s)|^2 \right) ds
    \end{align}
    as a consequence. The term $|\dot{\delta}| \norm{\Phi}{L^2}$ is treated by Young's inequality.

    For an $L^2$-estimate of $\Psi$ we again use the variable transform
    \[\tilde{\Psi} = \Psi + \Theta \Phi.\]
    Then $\tilde{\Psi}$ satisfies
    \begin{align}
        \label{Psi_equation_rearranged}
        \tilde{\Psi}_t + \Lambda(\bar{U}) \tilde{\Psi}_x = & \bar{E}(t,x) \tilde{\Psi} + \dot{\delta} \tilde{\Psi}_x - (\Lambda(\tilde{U}) - \Lambda(\bar{U})) \tilde{\Psi}_x\\
        \nonumber& + \left[\mathcal{O}(|\Phi|) + \mathcal{O}(|\Psi|) + \mathcal{O}(|\dot{\delta}|) \right]\tilde{\Psi} + \dot{\delta}\mathcal{O}(1) + \mathcal{O}(|\Phi|)
    \end{align}
    with a diagonal matrix $\bar{E}$ of the form
    \[\bar{E}(t,x) = E(\bar{U}) + \mathcal{O}(|\Phi|).\]
    Using the weighted inner product
    \[\skp{\tilde{\Psi}_j, \alpha_j \tilde{\Psi}_j}{}\]
    where $\alpha_j$ is as in \eqref{definition_alpha_j} with possibly larger constant $C_{\alpha,j} > 0$ and smaller exponent $c_{\alpha,j}> 0$ one deduces similar to the treatment of $\Phi$ that
    \begin{align}
    \label{Psi_L_2_estimate_pre_final}
    \norm{\tilde{\Psi}(t)}{L^2}^2 \leq &C e^{-\theta t} \norm{\tilde{\Psi}(0)}{L^2}^2 + C
    \int_0^t e^{-\theta(t-s)} \left(\norm{\Phi(s)}{L^2}^2 +
    |\dot{\delta}(s)|^2\right)ds\\
    \nonumber& + C \varepsilon \int_0^t e^{-\theta(t-s)} \norm{\tilde{\Psi}(s)}{L^2}^2ds.
    \end{align}
    The term
    \begin{align}
        \label{Psi_inner_product_Psi_x_with_lambda_difference}
        \skp{\tilde{\Psi}_j, (\lambda_j(\tilde{U}) - \lambda_j(\bar{U})) \alpha_j \partial_x \tilde{\Psi}_j}{},
    \end{align}
    which arises when taking the weighted inner product of $\tilde{\Psi}$ with equation \eqref{Psi_equation_rearranged} and whose analog in the $\Phi$ estimates 
    was handled simply by smallness of $\Psi$, needs to be taken care of by partial
    integration using the Taylor expansion
    \[\lambda_j(\tilde{U}) - \lambda_j(\bar{U}) = \int_0^1 d\lambda_j(\bar{U} + s U) ds \cdot U.\]
    Doing so yields
    \[\skp{\tilde{\Psi}_j, (\lambda_j(\tilde{U}) - \lambda_j(\bar{U})) \alpha_j \partial_x \tilde{\Psi}_j}{} \leq C \varepsilon \skp{\tilde{\Psi}_j, \alpha_j \tilde{\Psi}_j}{} \]
    where $C$ still depends on the choice of constants in the defining equation \eqref{definition_alpha_j} of the weights $\alpha_j$ since
    the derivative $\partial_x \alpha_j$ appears when carrying out the partial integration. $C$ becomes a fixed constant as soon as the constants have been chosen
    to eliminate the terms with exponentially decaying weights. Thus, \eqref{Psi_inner_product_Psi_x_with_lambda_difference}
    feeds into the $\varepsilon$ term in the second line of \eqref{Psi_L_2_estimate_pre_final}.

    Gronwall's Lemma and using the definition of $\tilde{\Psi}$ together with 
    the $L^2$ damping estimate \eqref{Phi_L_2_damping_estimate} for $\Phi$ gives
    \begin{align}
        \label{Psi_L_2_damping_estimate}
        \norm{\Psi(t)}{L^2}^2 \leq &C e^{-\theta t} \norm{(\Phi,\Psi)(0)}{L^2}^2 + C
        \int_0^t e^{-\theta(t-s)} \left(\norm{\Phi(s)}{L^2}^2 +
        |\dot{\delta}(s)|^2\right)ds
    \end{align}
    with possibly larger $C$ and smaller $\theta$ than in \eqref{Psi_L_2_estimate_pre_final}.

    Finally,
    \[\tilde{\Upsilon} = \Upsilon + \Theta \Psi\]
    satisfies an equation of the form
    \begin{align*}
        \tilde{\Upsilon}_t + \Lambda(\bar{U}) \tilde{\Upsilon}_x = & \bar{E}(t,x) \tilde{\Upsilon} + \dot{\delta} \tilde{\Upsilon}_x - (\Lambda(\tilde{U}) - \Lambda(\bar{U})) \tilde{\Upsilon}_x \\
        \nonumber& + \left[\mathcal{O}(|\Phi|) + \mathcal{O}(|\Psi|) + \mathcal{O}(|\dot{\delta}|) \right]\tilde{\Upsilon} + \dot{\delta}\mathcal{O}(1) + \mathcal{O}(|\Phi|) + \mathcal{O}(|\Psi|)
    \end{align*}
    which gives with the techniques described above an estimate of the form
    \begin{align}
        \label{Upsilon_L_2_estimate_pre_final}
        \norm{\tilde{\Upsilon}(t)}{L^2}^2 \leq &C e^{-\theta t} \norm{\tilde{\Upsilon}(0)}{L^2}^2\\
        \nonumber & + C \int_0^t e^{-\theta(t-s)} \left(\norm{\Psi(s)}{L^2}^2 + \norm{\Phi(s)}{L^2}^2 +
        |\dot{\delta}(s)|^2\right)ds\\
        \nonumber& + C \varepsilon \int_0^t e^{-\theta(t-s)} \norm{\tilde{\Upsilon}(s)}{L^2}^2ds.
    \end{align}
    Similar to the $C^0_b$-case, use a large multiple of the $\Psi$ $L^2$-estimate \eqref{Psi_L_2_damping_estimate}
    to remove the $\Psi$ forcing in \eqref{Upsilon_L_2_estimate_pre_final}, and 
    unwrap definitions to find the damping estimate
    \begin{align*}
        \norm{\Upsilon(t)}{L^2}^2 \leq &C e^{-\theta t} \norm{(\Phi(0), \Psi(0), \Upsilon(0))}{L^2}^2\\
        \nonumber & + C \int_0^t e^{-\theta(t-s)} \left(\norm{\Phi(s)}{L^2}^2 +
        |\dot{\delta}(s)|^2\right)ds
    \end{align*}
    which together with the estimates \eqref{Phi_L_2_damping_estimate}, \eqref{Psi_L_2_damping_estimate}
    implies \eqref{L_2_damping_estimate} since $L(\tilde{U})$ and $R(\tilde{U})$ are bounded.

\end{proof}

\bibliographystyle{plain}
\bibliography{MOC_damping_estimates}

@Article{MZ05,
  author     = {Mascia, C. and Zumbrun, K.},
  title      = {Stability of large-amplitude shock profiles of general relaxation systems},
  journal    = {SIAM J. Math. Anal.},
  year       = {2005},
  volume     = {37},
  number     = {3},
  pages      = {889--913},
  issn       = {0036-1410,1095-7154},
  doi        = {10.1137/S0036141004435844},
  fjournal   = {SIAM Journal on Mathematical Analysis},
  mrclass    = {35L60 (35B35 35F20 76L05)},
  mrnumber   = {2191781},
  mrreviewer = {Thomas\ Hagen},
  url        = {https://doi.org/10.1137/S0036141004435844},
}

@Book{K95,
  title     = {Perturbation theory for linear operators},
  publisher = {Springer-Verlag, Berlin},
  year      = {1995},
  author    = {Kato, T.},
  series    = {Classics in Mathematics},
  isbn      = {3-540-58661-X},
  note      = {Reprint of the 1980 edition},
  mrclass   = {47A55 (46-00 47-00)},
  mrnumber  = {1335452},
  pages     = {xxii+619},
}

@Article{YZ20,
  author   = {Yang, Z. and Zumbrun, K.},
  title    = {Stability of hydraulic shock profiles},
  journal  = {Arch. Ration. Mech. Anal.},
  year     = {2020},
  volume   = {235},
  number   = {1},
  pages    = {195--285},
  issn     = {0003-9527,1432-0673},
  doi      = {10.1007/s00205-019-01422-4},
  fjournal = {Archive for Rational Mechanics and Analysis},
  mrclass  = {76L05 (35L60 76E30)},
  mrnumber = {4062477},
  url      = {https://doi.org/10.1007/s00205-019-01422-4},
}

@Article{SK85,
  author     = {Shizuta, Y. and Kawashima, S.},
  title      = {Systems of equations of hyperbolic-parabolic type with applications to the discrete {B}oltzmann equation},
  journal    = {Hokkaido Math. J.},
  year       = {1985},
  volume     = {14},
  number     = {2},
  pages      = {249--275},
  issn       = {0385-4035},
  doi        = {10.14492/hokmj/1381757663},
  fjournal   = {Hokkaido Mathematical Journal},
  mrclass    = {35M05 (35B40 76P05 82A40)},
  mrnumber   = {798756},
  mrreviewer = {Carlo\ Cercignani},
  url        = {https://doi.org/10.14492/hokmj/1381757663},
}

@Article{BW24,
  author     = {Blochas, P. and Wheeler, A.},
  title      = {Majda and {ZND} models for detonation: nonlinear stability vs. formation of singularities},
  journal    = {SIAM J. Math. Anal.},
  year       = {2024},
  volume     = {56},
  number     = {5},
  pages      = {6137--6191},
  issn       = {0036-1410,1095-7154},
  doi        = {10.1137/23M1544945},
  fjournal   = {SIAM Journal on Mathematical Analysis},
  mrclass    = {35L40 (35B40 35B44 35L67 80A25)},
  mrnumber   = {4793817},
  mrreviewer = {Antonio\ Bove},
  url        = {https://doi.org/10.1137/23M1544945},
}

@Article{MZ02,
  author     = {Mascia, C. and Zumbrun, K.},
  title      = {Pointwise {G}reen's function bounds and stability of relaxation shocks},
  journal    = {Indiana Univ. Math. J.},
  year       = {2002},
  volume     = {51},
  number     = {4},
  pages      = {773--904},
  issn       = {0022-2518,1943-5258},
  doi        = {10.1512/iumj.2002.51.2212},
  fjournal   = {Indiana University Mathematics Journal},
  mrclass    = {35L60 (35A08 35B45 47N20 76L05)},
  mrnumber   = {1947862},
  mrreviewer = {Vladimir\ Tulovsky},
  url        = {https://doi.org/10.1512/iumj.2002.51.2212},
}

@Article{S25,
  author   = {Sroczinski, M.},
  title    = {Global existence and asymptotic decay for small solutions of general quasilinear hyperbolic balance laws},
  journal  = {J. Hyperbolic Differ. Equ.},
  year     = {2025},
  volume   = {22},
  number   = {4},
  pages    = {613--642},
  issn     = {0219-8916,1793-6993},
  doi      = {10.1142/S0219891625500171},
  fjournal = {Journal of Hyperbolic Differential Equations},
  mrclass  = {35L51 (35A01 35B35)},
  mrnumber = {5010756},
  url      = {https://doi.org/10.1142/S0219891625500171},
}

@Article{KY04,
  author     = {Kawashima, S. and Yong, W.-A.},
  title      = {Dissipative structure and entropy for hyperbolic systems of balance laws},
  journal    = {Arch. Ration. Mech. Anal.},
  year       = {2004},
  volume     = {174},
  number     = {3},
  pages      = {345--364},
  issn       = {0003-9527,1432-0673},
  doi        = {10.1007/s00205-004-0330-9},
  fjournal   = {Archive for Rational Mechanics and Analysis},
  mrclass    = {35L60 (35L65)},
  mrnumber   = {2107774},
  mrreviewer = {Sylvie\ Benzoni-Gavage},
  url        = {https://doi.org/10.1007/s00205-004-0330-9},
}

@Article{BHN07,
  author     = {Bianchini, S. and Hanouzet, B. and Natalini, R.},
  title      = {Asymptotic behavior of smooth solutions for partially dissipative hyperbolic systems with a convex entropy},
  journal    = {Comm. Pure Appl. Math.},
  year       = {2007},
  volume     = {60},
  number     = {11},
  pages      = {1559--1622},
  issn       = {0010-3640,1097-0312},
  doi        = {10.1002/cpa.20195},
  fjournal   = {Communications on Pure and Applied Mathematics},
  mrclass    = {35L60 (35B40 35L45)},
  mrnumber   = {2349349},
  mrreviewer = {Fumioki\ Asakura},
  url        = {https://doi.org/10.1002/cpa.20195},
}

@Article{BZ11,
  author     = {Beauchard, K. and Zuazua, E.},
  title      = {Large time asymptotics for partially dissipative hyperbolic systems},
  journal    = {Arch. Ration. Mech. Anal.},
  year       = {2011},
  volume     = {199},
  number     = {1},
  pages      = {177--227},
  issn       = {0003-9527,1432-0673},
  doi        = {10.1007/s00205-010-0321-y},
  fjournal   = {Archive for Rational Mechanics and Analysis},
  mrclass    = {35L60 (35B35 35B40 35C07 35L40)},
  mrnumber   = {2754341},
  mrreviewer = {Toan\ Nguyen},
  url        = {https://doi.org/10.1007/s00205-010-0321-y},
}

@Article{Ze99,
  author     = {Zeng, Y.},
  title      = {Gas dynamics in thermal nonequilibrium and general hyperbolic systems with relaxation},
  journal    = {Arch. Ration. Mech. Anal.},
  year       = {1999},
  volume     = {150},
  number     = {3},
  pages      = {225--279},
  issn       = {0003-9527,1432-0673},
  doi        = {10.1007/s002050050188},
  fjournal   = {Archive for Rational Mechanics and Analysis},
  mrclass    = {35L40 (76N10 76N15 76V05)},
  mrnumber   = {1738119},
  mrreviewer = {Christophe\ Cheverry},
  url        = {https://doi.org/10.1007/s002050050188},
}

\end{document}